\documentclass[12pt]{article}
\usepackage[english]{babel}
\usepackage[cp1251]{inputenc}

\usepackage{amsthm}
\usepackage{hhline}
\usepackage{amscd}

\usepackage{amsfonts, amscd, amsmath, amssymb}
\usepackage[dvips]{graphicx}
\usepackage[cp1251]{inputenc}
\usepackage[T2A]{fontenc}	

\usepackage{amsthm}
\usepackage{amssymb}
\usepackage{amsmath,amsthm,amsfonts}
\usepackage{hhline}

\oddsidemargin=3,6mm
\evensidemargin=3,6mm
\textwidth=17cm
\textheight=22cm

\newtheorem{theorem}{Theorem}[section]
\newtheorem{lemma}{Lemma}[section]
\newtheorem{rem}{Remark}[section]
\newtheorem{cor}{Corrolary}[section]

\begin{document}
\title{ Curvature forms and Curvature functions for 2-manifolds with boundary}
\author{Kaveh Eftekharinasab
\footnote{Institute of mathematics of Ukrainian Academy of Sciences.
E-mail : kaveh@imath.kiev.ua}}
\date{}
\maketitle
\begin{abstract}
We obtained that any 2-form and any smooth function on 2-manifolds with boundary can be realized as the
 curvature form and the Gaussian curvature function of some  Riemannian metric, respectively.

\end{abstract}

\section{Introduction}
\indent

For 2-manifolds, possibly, with boundary the classical Gauss Bonnet formula asserts a relationship
 between the Euler characteristic of a manifold and its Gaussian curvature and the geodesic curvature of
 the boundary. This is the only known obstruction on a given 2-form on a manifold to be the curvature form
 of some Riemannian metric. Nevertheless, it imposes a constraint on the sign of a function for being 
the curvature function of a metric. The problem of prescribing curvature forms on closed 2-manifolds 
was solved by Wallach and Warner \cite{warner}. They showed that the Gauss Bonnet formula is a necessary 
and sufficient condition on a 2-form to be a curvature form. Later, the problem of prescribing curvature 
functions has been studied by some authors and completely solved for closed manifold by Kazadan and
 Warner \cite{kazdan}. They proved that any smooth function which satisfies Gauss Bonnet sign condition 
is the Gaussian curvature of some Riemannian metric.

In this paper we deal with 2-manifolds with boundary and the problems of prescribing curvature forms and and
 curvature functions . In contrast with the case when manifolds have nonempty boundary no obstruction on 
2-forms and functions arises. It turns out that any 2-form and smooth function can be realized as the 
curvature form and curvature function of a metric respectively, this is a surprising phenomena.
\section{Preliminaries And The Main Results}

If we want to study manifolds with boundary we  often face with the problems of extensions and restrictions
of smooth objects, 
we handle these problems by gluing manifolds together, providing desired extensions using the
 elementary techniques of differential topology. At first, we shall consider the problem of realization of forms then the same 
method will be used for functions.\bigskip \\
In the case of 2-manifolds with boundary two problems arise: the first question is as the same as 
the question addressed by Wallach and Warner, i.e. which $2-$form on $M$ can be
the curvature form of some Riemannian metric? In this paper we answer to the question by employing the typical techniques 
of differential topology. Another question is more difficult and still is unsolved: Given a $2-$form $\Omega$ on $M$ and a $1-$form
$\Psi$ on $\partial M$, is there any metric with $\Omega$ and $\Psi$ as its curvature form and geodesic form,
respectively? It is believed that this problem can be solved by the Hodge decomposition theorem for manifolds with
boundary  resembles to the proof of Wallach and Warner in the case of manifolds without boundary.  

Suppose $M$ and $N$ are manifolds with smooth boundary. The existence of a collar neighborhood of the boundary
enables us to construct smooth extensions 
of maps which are defined only on the boundary to the whole manifold. Also we can glue smooth maps so that it agrees with a smooth structure 
on the constructed manifold. If $f_1$ and $f_2$ are smooth maps 
 defined on $M$ and $N$, respectively, then
we can piece them together to get a smooth map $H = f_1 \cup f_2$ on $ W = M \cup_{\Phi} N$, here $\Phi$ is a gluing
diffeomorphism, see~\cite [Lemma 3.7]{milnor} and~\cite [Theorem 2.8]{mo}. The notion $H = f_1 \cup f_2$ 
is a bit deceiving. The map $H$ may not quite restricts to
$f_1$ on $M$ nor $f_2$ on $N$, and so we need to modify $f_1$ and $f_2$ in respective
 collar neighborhood of boundaries.  In future when we glue smooth maps $f_1$ and $f_2$ and write $ f_1 \cup f_2$
we keep in mind that the obtained map is the map which is obtained by modifying $f_1$ and $f_2$ in collar
neighborhoods of their boundaries.\newpage
 Let $M$ be a connected, compact and oriented 2-manifold with smooth boundary. Now, glue 2-disk
 $D^{2}$ to $M$ to get a 2-manifold without boundary $\widetilde{M}$, suitably oriented, joined
 together along boundaries. Now we shall have occasion to extend forms from $M$ to the whole manifold: 
Suppose $\omega_{1}$ and $\omega_{2}$ are given 2-forms on $M$ and $D^{2}$ respectively,
(here we just consider 2-forms but in general it is true for arbitrary forms) 
which are locally represented as $\omega_{1}=f_{12}\:dx^1 \wedge\ dx^2$ and 
$\omega_{2}=g_{12}\:dy^1\wedge dy^2$ in collar neighborhoods of their boundaries, then smoothly piece together
  $f_{12}$ and $g_{12}$ in collar  
neighborhoods. In result we obtain a smooth function, say $h_{12}$, 
and consequently an $2$-form 
$\omega = h_{12}\,\,x_1\wedge x_2 $
on $\widetilde{M}$. 

We emphasize that the restrictions of $\omega$ to boundaries can not be
identified with $2-$forms on boundaries, rather they are smooth sections in restricted $2-$form bundles
$\bigwedge^2(M)|_{\partial M}$ and $\bigwedge^2(D^{2})|_{\partial D^{2}}$
 
\begin{lemma}
Let $\omega$ be a given $2-$form on $M$. Then for any arbitrary nonzero real number $\theta$ there exists an extension $\bar{\omega}$ 
of $\omega$ to $D^2$ such that 
$\int_ {D^{2}} \bar{\omega}= \theta.$
\end{lemma}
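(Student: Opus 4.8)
The plan is to exhibit one concrete extension of $\omega$ across the disk, compute its integral, and then repair the value of that integral by adding an interior bump form. What makes this work is that $D^2$ is contractible, so there is no cohomological obstruction to realizing a prescribed real number as $\int_{D^2}\bar\omega$; on a closed surface one would instead be pinned down by Gauss--Bonnet, which is exactly the contrast the paper is after.

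First I would fix a collar neighbourhood $C\cong\partial M\times[0,1)$ of $\partial M$ on which $\omega$ is given, and transport it, via the gluing identification $\partial M\cong\partial D^2$, to a collar $U\cong S^1\times[0,1)$ of $\partial D^2$ in $D^2$; call the resulting $2$-form on $U$ by $\omega_0$. In collar coordinates $(\theta,r)$ on $U$ write $\omega_0=h(\theta,r)\,d\theta\wedge dr$ with $h$ smooth (and $2\pi$-periodic in $\theta$), choose a cutoff $b\colon[0,1)\to[0,1]$ with $b\equiv1$ on $[0,\varepsilon]$ and $\operatorname{supp}b\subset[0,2\varepsilon]$, and set $\bar\omega_1:=b(r)\,h(\theta,r)\,d\theta\wedge dr$ on $U$, extended by $0$ to all of $D^2$. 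This is a smooth $2$-form on $D^2$ that coincides with $\omega_0$ on the smaller collar $S^1\times[0,\varepsilon]$, hence it is an admissible extension in the sense of the piecing-together construction described before the lemma (on the bi-collar both $\omega$ and $\bar\omega_1$ are literally the coefficient expression $h\,d\theta\wedge dr$, so the smoothing of Theorem 1.4 of \cite{milnor} applies with nothing to reconcile). Put $c:=\int_{D^2}\bar\omega_1\in\mathbb{R}$.

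Next I would correct the integral. Pick a point $p$ in the interior of $D^2$ outside $\operatorname{supp}\bar\omega_1$, a positively oriented coordinate disk $V\ni p$ disjoint from $\operatorname{supp}\bar\omega_1$ and from $\partial D^2$, and a bump function $\varphi\ge 0$ supported in $V$ with $\int_V\varphi\,dx\,dy>0$; then with $\lambda:=(a-c)\big/\!\int_V\varphi\,dx\,dy$ the form $\eta:=\lambda\,\varphi\,dx\wedge dy$ is smooth on $D^2$, vanishes near $\partial D^2$, and satisfies $\int_{D^2}\eta=a-c$ (this is possible for every real $a-c$, in particular for the given $a\neq 0$). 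Setting $\bar\omega:=\bar\omega_1+\eta$ gives a smooth $2$-form on $D^2$ agreeing with $\omega_0$ near $\partial D^2$, so $\bar\omega$ is still an extension of $\omega$, and $\int_{D^2}\bar\omega=c+(a-c)=a$.

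The only genuine point to verify, rather than a routine computation, is that matching $\omega$ exactly on a collar of the common boundary suffices for $\omega$ and $\bar\omega$ to assemble into a smooth $2$-form on $\widetilde M$; but this is precisely what the paragraph preceding the lemma establishes, so no additional argument is required. I expect the hypothesis $a\neq 0$ to be irrelevant to the lemma as stated — the construction is unchanged for $a=0$ — and to be recorded only because that is the form in which the lemma is invoked later, when $a$ will be chosen as the prescribed total integral of the curvature form on $\widetilde M$.
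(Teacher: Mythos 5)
Your proof is correct, and it follows the same overall strategy as the paper (freeze the form on a collar of $\partial D^2$ with a cutoff, then adjust the total integral in the interior), but the correction step is genuinely different and in fact cleaner. The paper fixes the integral \emph{multiplicatively}: it starts from an arbitrary extension with nonzero integral, cuts it off, and then multiplies by a function equal to $1$ near the boundary and to $\frac{a-k_1}{k_2+k_3}\,g$ elsewhere, which forces it to assume the starting extension has nonzero integral and to arrange that the auxiliary quantity $k_2+k_3$ is nonzero. You instead correct \emph{additively}, adding a bump $2$-form $\eta=\lambda\varphi\,dx\wedge dy$ supported in an interior disk with $\int\eta=a-c$; since $a-c$ can be any real number and $\eta$ vanishes near $\partial D^2$, no nondegeneracy conditions are needed and the boundary behaviour is untouched by construction. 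Your closing observation is also accurate: the hypothesis $a\neq 0$ plays no role in either argument (in the paper it is only there to make the multiplicative rescaling well behaved, and even there it is not really the relevant condition), and the value actually needed later is $a=2\pi\chi(\widetilde M)-\int_M\omega$, which need not be nonzero.
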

\begin{proof}
Let $\widetilde{\omega}$ be an arbitrary extension such that 
$\int_ {D^2}\widetilde{\omega}\neq 0$. We construct 2-form $ \bar{\omega} $ 
using bump function such that in an open neighborhood of the boundary coincides with
 $\widetilde{\omega}$ and $\int_ {D^{2}} \bar{\omega}= \theta$. 

Let $U$ be an open neighborhood 
of the boundary and $V$ be an open neighborhood of the boundary possibly smaller.
 Let $$f dx^{1} \wedge dx^{2}$$ be a local
 representation of $\widetilde{\omega}$ in $U$. Choose a smooth bump function $g$ supported 
in $U$ which is identically 1 in a neighborhood $V$ of the boundary.
Define
\begin{equation*}
\tilde{f}(x) = 
      \left\{ \begin{array}{l} 
          f(x)g(x),\qquad x\in U, \\ 0, \qquad \qquad \quad \, \mbox{otherwise}.
           \end{array} \right.
\end{equation*}
$\tilde{f}$ is smooth on $U$. If $x \notin U$, then $x$ does not belong to the support of $f$,
 hence there is an open set containing $x$ on which $\tilde{f}$ is $0$,
 because the support of $f$ is closed. Thus $\tilde{f}$ is smooth everywhere other than 
$U$ as well. Finally, since $\tilde{f}$ equals the identity on $V$, it follows $\tilde{f}$
 coincides with $f$ on $V$. 
Put $$\widehat{\omega} = \tilde{f} \,\widetilde{\omega},$$ and assume 
$$\int_{D^2} \widehat{\omega}= k\neq 0,\, 
\int_{U}\widehat{\omega}=k_1$$
and  $$ \int_{\Omega} \widehat{\omega}= k_2,\, 
 \int_{\widetilde{D^2}} \widehat{\omega}=k_3.$$
 Where $\Omega$ is the space 
between $U$ and $V$. Now define a new function
\begin{equation*}
h(x) = 
      \left\{ \begin{array}{l} 
          \mbox{identity} ,\qquad \qquad \,\, x\in V, \\ \dfrac{a-k_{1}}{k_{2}+k_{3}} g, \qquad \qquad  \mbox{elsewhere}.
           \end{array} \right.
\end{equation*}
Obviously, $h$ is smooth. Set $$\bar{\omega} = h \widehat{\omega}.$$ 
(Notice that we always can choose neighborhoods and function $g$ such that $k_2+k_3\neq 0$). $\bar{\omega}$ 
is the desired extension because it coincides with $\tilde{w}$ on an open neighborhood of the boundary this
means
it is smoothly extended and $\int_ {D^{2}} \bar{\omega}= \theta$.  
\end{proof}

As an evident consequence of this lemma we have the following corollary.
\begin{cor}\label{corollary 1}
For any 2-form $\omega$ on $M$ there exists an extension $\widetilde{\omega}$ such that 
$$\int_{\widetilde{M}} \widetilde{\omega}=2 \pi \chi \widetilde{M})$$ 
\end{cor}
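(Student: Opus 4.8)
The plan is to reduce the corollary directly to the Lemma together with the gluing construction described above. First I would note that since $M$ is compact and oriented, the integral $\int_M \omega$ is a well-defined real number; set
\[
a := 2\pi\chi(\widetilde{M}) - \int_M \omega .
\]
The idea is to extend $\omega$ across the glued disk $D^2$ so that the contribution of the extension over $D^2$ is exactly $a$; then additivity of the integral over $\widetilde{M}=M\cup D^2$ gives the claimed value.

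Second, assuming for the moment $a\neq 0$, I would invoke the Lemma to produce an extension $\bar{\omega}$ of $\omega$ to $D^2$ with $\int_{D^2}\bar{\omega}=a$. By the discussion preceding the Lemma, $\omega$ on $M$ and $\bar{\omega}$ on $D^2$ agree in a collar of the common boundary (indeed $\bar{\omega}$ was built to coincide near $\partial D^2$ with a genuine smooth extension of $\omega$), so piecing them together yields a smooth $2$-form $\widetilde{\omega}$ on $\widetilde{M}$ with $\widetilde{\omega}|_M=\omega$. Then
\[
\int_{\widetilde{M}}\widetilde{\omega}=\int_M\omega+\int_{D^2}\bar{\omega}=\int_M\omega+a=2\pi\chi(\widetilde{M}),
\]
as desired.

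The only loose end is the degenerate case $a=0$, i.e.\ when $\int_M\omega$ already equals $2\pi\chi(\widetilde{M})$; here the Lemma as stated does not directly apply, since it targets a nonzero value. I would handle this by choosing any nonzero real $a'$ and the corresponding extension $\bar{\omega}'$ from the Lemma, then subtracting a bump $2$-form $\eta$ compactly supported in the interior of $D^2$ with $\int_{D^2}\eta=a'$; the form $\bar{\omega}'-\eta$ is still an extension of $\omega$ (unchanged near $\partial D^2$) and now integrates to $0$ over $D^2$, so gluing as before finishes this case. (Alternatively, one checks that the construction in the proof of the Lemma is valid for $a=0$ verbatim, the hypothesis $a\neq 0$ never actually being used.)

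I expect no serious obstacle: the substance is entirely contained in the Lemma, and the remaining points---well-definedness and additivity of the integral, smoothness of the pieced-together form, and the $a=0$ fix---are routine. The step deserving the most care is verifying that the glued form is genuinely smooth across the gluing locus, but this is precisely what the collar-neighborhood argument recalled before the Lemma provides.
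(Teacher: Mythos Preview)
Your proposal is correct and is exactly the intended argument: the paper gives no explicit proof at all, merely calling the corollary ``an evident consequence'' of the Lemma, and what you have written is precisely the evident derivation (choose $a=2\pi\chi(\widetilde{M})-\int_M\omega$ and invoke the Lemma). Your treatment of the degenerate case $a=0$ is an improvement over the paper, which overlooks that the Lemma as stated requires $a\neq 0$.
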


\begin{theorem}\label{Theorem 1}
Let M be a connected, compact and oriented 2-manifold with smooth boundary. Then any 2-form $\omega$ on $M$ is the curvature form of some Riemannian metric $g$ on $M$.
\end{theorem}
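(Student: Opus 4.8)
The strategy is to reduce the problem on $M$ (with boundary) to the corresponding known result on the closed manifold $\widetilde{M}$, namely the Wallach–Warner theorem: on a closed oriented 2-manifold, a 2-form is a curvature form of some Riemannian metric if and only if its integral equals $2\pi\chi$. First I would invoke Corollary 2.1 to extend the given 2-form $\omega$ on $M$ to a 2-form $\widetilde{\omega}$ on $\widetilde{M} = M \cup_{\partial} D^2$ satisfying the Gauss–Bonnet normalization $\int_{\widetilde{M}} \widetilde{\omega} = 2\pi\chi(\widetilde{M})$. By Wallach–Warner, there is then a Riemannian metric $\widetilde{g}$ on $\widetilde{M}$ whose curvature form is exactly $\widetilde{\omega}$. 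Restricting $\widetilde{g}$ to $M \subset \widetilde{M}$ gives a metric $g := \widetilde{g}|_M$ on $M$, and since curvature is locally determined, the curvature form of $g$ on $M$ is the restriction of the curvature form of $\widetilde{g}$, which is $\widetilde{\omega}|_M = \omega$. That completes the argument.

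The key steps, in order, are: (1) apply Corollary 2.1 to get the normalized extension $\widetilde{\omega}$ on $\widetilde{M}$; (2) cite Wallach–Warner to realize $\widetilde{\omega}$ as the curvature form of some metric $\widetilde{g}$ on $\widetilde{M}$; (3) set $g = \widetilde{g}|_M$ and observe, by locality of the curvature 2-form, that $\Omega_g = \Omega_{\widetilde{g}}|_M = \widetilde{\omega}|_M = \omega$; (4) note that $g$ is a genuine Riemannian metric on the manifold-with-boundary $M$ (restriction of a smooth positive-definite tensor stays smooth and positive-definite, including along $\partial M$, since $\partial M \subset \widetilde{M}$ is in the interior of the closed manifold). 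The role of Corollary 2.1 is precisely to remove the only obstruction present in the closed case — there is no topological constraint left on $\omega$ itself because we have the freedom to choose the integral of the extension over the glued disk $D^2$ to absorb whatever value is needed for Gauss–Bonnet on $\widetilde{M}$.

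The main obstacle — really the only substantive point — is making sure that the restriction operation is compatible with curvature in the way claimed, i.e. that the curvature form of the restricted metric at a point $p \in M$ (including $p \in \partial M$) agrees with the curvature form of $\widetilde{g}$ at $p$. This is immediate in the interior of $M$ by locality, and at boundary points it follows because $\partial M$ sits in the open interior of $\widetilde{M}$, so a neighborhood of $p$ in $\widetilde{M}$ (on which $\widetilde{g}$ and its curvature are defined and smooth) contains a relative neighborhood of $p$ in $M$; the Gaussian curvature function and curvature 2-form are computed from the metric germ at $p$. One should also record that the construction genuinely needs $\partial M \neq \emptyset$ only to have room to attach $D^2$; the compactness, connectedness and orientability of $M$ pass to $\widetilde{M}$ so that Wallach–Warner applies. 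No delicate analysis or PDE is required here — the difficulty of prescribing curvature has been entirely offloaded onto the closed-manifold theorem, and the boundary case is easier precisely because the extension has a free parameter.
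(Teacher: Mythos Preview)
Your proposal is correct and follows essentially the same approach as the paper: extend $\omega$ via Corollary~\ref{corollary 1} so that the Gauss--Bonnet condition holds on $\widetilde{M}$, apply the Wallach--Warner theorem to obtain a metric $\widetilde{g}$, and restrict to $M$. Your version is in fact more explicit in justifying why the restricted metric has curvature form $\omega$ (locality of curvature, smoothness at boundary points), but the underlying argument is identical.
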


\begin{proof}
By Corollary~\ref{corollary 1} there exits an extension $\widetilde{\omega}$ of ${\omega}$ such that
 $$ \int_{\widetilde{M}} \widetilde{\omega}  = 2 \pi \chi(\widetilde{M}), $$  
then by employing the theorem of Wallach and Warner~\cite{warner} for $\widetilde{\omega}$, we find a 
Riemannian metric $\widetilde{g}$ on $\widetilde{M}$ which its restriction to M is the expected metric.
\end{proof}
\begin{rem}
 On the boundary the curvature 2-form vanishes but the boundary value of given 2-form  $\omega$ computes by
pull-back $\jmath^* \omega$ ($\jmath : \partial M \hookrightarrow M $ is canonical inclusion). However, the metric $g$ on $M$ which its existence is guaranteed by the theorem with $\omega$ as its 2-curvature
form canonically induces a metric $\jmath^* g$ (ordinary distance function) to the boundary. The induced
metric determines the geodesic form $\Psi_{(\jmath^* g)}^1$ explicitly.
The geodesic form $\Psi_{(\jmath^* g)}^1$ is any 1-form $\varPhi$ on the boundary which satisfies 
$$\int_{\partial M}\varPhi = 2 \pi \chi(M) - \int_{M} \omega.$$
\end{rem}
\begin{rem}
Note that in what discussed and follows we just consider manifolds having only one boundary
 component, but, in general, when boundary consists of more than one component the theorems remain valid,
 we just need to glue $D^2$  to each component to get a closed manifold.
\end{rem}

Since in fact, we integrate a function not a 2-form, we may proceed with the same approach
 and expect the similar result for functions. however, again two problems arise: determining the Gaussian curvature and the geodesic curvature of
the boundary simultaneously, i.e. two smooth functions $f$ and $g$ are given, $f$ defined only on the boundary 
and $g$ defined on the whole manifold; we want to find a metric with $f$ as its geodesic curvature and $g$ as
its Gaussian curvature. This problem partially answered when $\chi(M) \leq 0$ in~\cite[Theorem 3]{cherrier}.
The problem which we concern  here is that given a smooth function $h$ on the whole manifold we look for a metric
which has $h$ as it Gaussian curvature and induced metric on the boundary determines the geodesic curvature of the boundary.

Suppose $M,\,\widetilde{M}$ are as before, and $f :M \rightarrow \mathbb{R} $ is smooth.
At a boundary point $p \in \partial M$, $f$ is smooth if there is a chart $(U,\phi)$ about $p$ such that
$f \circ \phi^{-1}$ is smooth at $\phi(p) \in \mathbb{H}^2$. The latter means that $f \circ \phi^{-1}$ has 
a smooth extension to a neighborhood of $\phi(p)$ in $\mathbb{R}^2$. 

\begin{rem}
 In~\cite{pl} it was addressed the the same problem of 2-manifold with boundary but in LP  category. In the
LP category the problem has different aspect, the problem has combinatorial character which causes to
use methods completely different from those used in smooth case.
\end{rem}
\begin{lemma}
Let $f$ be a smooth function defined on $M$. Then there exists an extension $\widetilde{f}$ to $\widetilde{M}$ such that
satisfies the Gauss-Bonnet sign conditions on $\widetilde{M}$.
\end{lemma}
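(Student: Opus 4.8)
The plan is to lean on the extension fact recorded just above---every smooth $f$ on $M$ admits a smooth extension to the closed surface $\widetilde{M}$---and then to adjust such an extension only on the interior of the glued disk $D^{2}$, leaving its values on $M$ untouched, so as to install whichever sign behaviour Gauss--Bonnet forces on $\widetilde{M}$. Recall that these sign conditions read: $\widetilde{f}$ must be positive at some point when $\chi(\widetilde{M})>0$; $\widetilde{f}$ must be identically zero or else take both a positive and a negative value when $\chi(\widetilde{M})=0$; and $\widetilde{f}$ must be negative at some point when $\chi(\widetilde{M})<0$.

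First I would fix a small open disk $B$ lying in the interior of $D^{2}$ and disjoint from a collar neighbourhood of $\partial D^{2}=\partial M$, and pick a bump function $\varphi\in C^{\infty}(\widetilde{M})$ with $0\le\varphi\le 1$, $\operatorname{supp}\varphi\subset B$, and $\varphi\equiv 1$ near the centre of $B$; for the case $\chi(\widetilde{M})=0$ I would instead take two such bumps $\varphi_{+},\varphi_{-}$ with disjoint supports inside $B$. Let $\widetilde{f}_{0}$ be any smooth extension of $f$ to $\widetilde{M}$.

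Then, according to the sign of $\chi(\widetilde{M})$, I put $\widetilde{f}:=\widetilde{f}_{0}+c\,\varphi$ with a constant $c>0$ large enough that $\widetilde{f}$ is positive near the centre of $B$ (case $\chi(\widetilde{M})>0$); $\widetilde{f}:=\widetilde{f}_{0}+c_{+}\varphi_{+}-c_{-}\varphi_{-}$ with $c_{\pm}>0$ large enough that $\widetilde{f}$ attains both signs (case $\chi(\widetilde{M})=0$); or $\widetilde{f}:=\widetilde{f}_{0}-c\,\varphi$ with $c>0$ large enough that $\widetilde{f}$ is negative near the centre of $B$ (case $\chi(\widetilde{M})<0$). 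In every case $\widetilde{f}$ is smooth, being a sum of smooth functions, and $\widetilde{f}|_{M}=f$ since the perturbation is supported in $B$, which is disjoint from $M$; by construction it satisfies the sign condition attached to $\widetilde{M}$, and the proof is complete.

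I do not anticipate a genuine obstacle here: the glued disk provides free room in which any prescribed sign pattern can be produced, so the statement is essentially soft. The only things to watch are that the support of the perturbation stays strictly off $M$ (otherwise $\widetilde{f}$ would cease to be an extension of $f$) and that the borderline case $\chi(\widetilde{M})=0$, whose sign condition is two-sided, is handled by building in both a positive and a negative value; everything else is routine.
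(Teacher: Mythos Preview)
Your argument is correct and follows the same overall strategy as the paper: start from an arbitrary smooth extension and perturb it only on the interior of the glued disk $D^{2}$, using bump functions, so as to enforce the Kazdan--Warner sign condition. The one technical difference is that the paper perturbs \emph{multiplicatively} (it multiplies the extension by a function equal to $1$ near $\partial D^{2}$ and of the appropriate sign at an interior point), whereas you perturb \emph{additively} (you add a large multiple of a compactly supported bump). Your additive version is arguably a bit more robust: the paper's multiplicative trick tacitly needs the extension to be nonzero at the point where the sign is flipped, whereas your construction works uniformly regardless of where $\widetilde{f}_{0}$ vanishes. Your handling of the case $\chi(\widetilde{M})=0$ with two disjoint bumps is also slightly more explicit than the paper's. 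In short, the two proofs are variations on the same idea.
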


\begin{proof} let $\bar{f}$ be an arbitrary extension which is not zero everywhere. Suppose $\chi(M) > 0$,
if there exists a point $x_0$ at which $f(x_0) > 0$ there is nothing to do. Otherwise, multiply $f$ to
a smooth function $g$, where
$$
\begin{array}{lll}
g=\,
$$
\left\{
\begin{array}{ll}
1, \qquad \quad \quad \mbox{in an open neighborhood of the boundary},\\
\mbox{negative}, \quad \mbox{at some point},
\end{array} \right.
$$
\end{array}
$$
Obviously, $fg$ is smooth and so is a desired extension. If $\chi(M) < 0$ we can modify the extension likewise.
If $\chi(M) = 0$ and $f$ does not vanish identically and does not change sign, it is strictly positive or 
negative, thereby we just need to multiply it to a smooth function which is equal to the identity in an open 
neighborhood of the boundary of $D^2$ and changes sign elsewhere.
\end{proof}

\begin{theorem}
Let M be a compact, connected and oriented 2-manifold with smooth boundary. Then any smooth function
 f is the Gaussian curvature of some Riemannian metric on M.
\end{theorem}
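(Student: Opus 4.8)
The plan is to mirror exactly the proof of Theorem~\ref{Theorem 1}, replacing the Wallach--Warner theorem for forms by the Kazdan--Warner theorem for functions~\cite{kazdan}. The idea is that on the closed surface $\widetilde{M}$ the only obstruction to prescribing a Gaussian curvature is the Gauss--Bonnet sign condition, that Lemma~\ref{lem2} produces an extension of $f$ meeting precisely that condition, and that restricting the resulting metric back to $M$ finishes the argument.

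Concretely, I would proceed as follows. First, glue a $2$-disk $D^2$ to $M$ along $\partial M$ to form the closed oriented surface $\widetilde{M}$, as in Section~2, so that $\chi(\widetilde{M}) = \chi(M) + 1$. Next, apply Lemma~\ref{lem2} (whose proof adapts verbatim with $\chi(\widetilde{M})$ in place of $\chi(M)$, since this is the Euler characteristic relevant to the closed manifold) to extend $f$ to a smooth function $\widetilde{f}$ on $\widetilde{M}$ satisfying the Gauss--Bonnet sign condition: $\widetilde{f}$ is positive somewhere if $\chi(\widetilde{M})>0$, negative somewhere if $\chi(\widetilde{M})<0$, and either changes sign or vanishes identically if $\chi(\widetilde{M})=0$. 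Then invoke the Kazdan--Warner theorem~\cite{kazdan}: since $\widetilde{M}$ is closed and $\widetilde{f}$ obeys the sign condition, there is a Riemannian metric $\widetilde{g}$ on $\widetilde{M}$ whose Gaussian curvature equals $\widetilde{f}$. Finally, put $g = \widetilde{g}|_{M}$. Since the Gaussian curvature is a pointwise invariant of the metric, the curvature of $g$ at any $x \in M$ equals the curvature of $\widetilde{g}$ at $x$, namely $\widetilde{f}(x) = f(x)$; hence $g$ is the required metric on $M$. When $\partial M$ has several components one glues a disk to each, as in the Remark following Theorem~\ref{Theorem 1}.

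I expect the only genuinely delicate point to be the bookkeeping around the sign condition: one must make sure that the condition invoked through Lemma~\ref{lem2} is the one associated with the closed surface $\widetilde{M}$, not with $M$ itself, and that the bump-function modification used to flip the sign of the extension on $D^2$ does not spoil smoothness across the gluing collar — both are already handled by the construction in the proof of Lemma~\ref{lem2}. Beyond this there is essentially no obstacle, and indeed that is the whole point: unlike the closed case, where the sign of $f$ is constrained on all of $M$, here we may freely adjust the extension on $D^2$, which is exactly what dissolves the Gauss--Bonnet obstruction. One should also note the harmless edge case $\widetilde{f}\equiv 0$, which is admissible in Kazdan--Warner's statement and occurs only when $\chi(\widetilde{M})=0$.
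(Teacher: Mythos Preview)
Your proposal is correct and follows exactly the paper's own proof: extend $f$ to $\widetilde{f}$ on the closed surface $\widetilde{M}$ via Lemma~\ref{lem2} so that the Gauss--Bonnet sign condition holds, invoke Kazdan--Warner to obtain a metric on $\widetilde{M}$ with curvature $\widetilde{f}$, and restrict to $M$. Your additional remarks on the Euler-characteristic bookkeeping, the $\widetilde{f}\equiv 0$ edge case, and multiple boundary components are sound elaborations but do not deviate from the paper's approach.
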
 

\begin{proof}
 By Lemma 2.2. there exists an extension $\widetilde{f}$ of $f$ satisfying the
sign conditions, then by the theorem of Kazdan and warner \cite{kazdan}, there exists a metric on 
$\widetilde{M}$ possesses $\widetilde{f}$ as its Gaussian curvature, restriction of the metric to 
$M$ is the expected metric.  
\end{proof}
\begin{rem}
 On the boundary the function $f$ can not identified the Gaussian curvature because the dimension of
the boundary is one. However, the obtained metric $g$ induces a metric $\jmath^*g$ (ordinary distance function)
to the boundary. The induced metric determines the geodesic curvature $k_{(\jmath^*g)}$ of the boundary explicitly. The geodesic curvature
of boundary is any smooth function $k$ on the boundary which satisfies 
$$\int _{\partial M}k = 2\pi\chi(M) -\int_{M}f dA.$$    
\end{rem}

 \end{document}